\newtheorem{theorem}{Theorem}
\newtheorem{lemma}{Lemma}
\newproof{proof}{Proof}
\newdefinition{note}{Note}
\newcommand{\cone}{\mathop{\rm cone}}
\newcommand{\conv}{\mathop{\rm conv}}
\newcommand{\ZZ}{\mathbb{Z}}
\newcommand{\QQ}{\mathbb{Q}}
\newcommand{\RR}{\mathbb{R}}
\DeclareMathOperator{\poly}{poly}
\DeclareMathOperator{\defect}{def}
\DeclareMathOperator{\size}{size}
\DeclareMathOperator{\diag}{diag}
\DeclareMathOperator{\linh}{span}
\DeclareMathOperator{\MainAlgName}{Min-DConic}
\DeclareMathOperator{\PrepAlgName}{Preprocessing}
\newcommand{\NewAdd}[1]{{#1}}
\journal{Discrete Applied Mathematics}
\begin{document}

\begin{frontmatter}



\title{\NewAdd{A polynomial algorithm for minimizing discrete \NewAdd{convic} functions in~fixed~dimension}\tnoteref{title_ref}}
\tnotetext[title_ref]{The work was supported by the Russian Science Foundation Grant No. 17-11-01336.}

%


\author{S.~I.~Veselov}
\ead{Sergey.Veselov@itmm.unn.ru}
\author{D.~V.~Gribanov}
\ead{Dmitry.Gribanov@itmm.unn.ru}
\author{N.~Yu.~Zolotykh}
\ead{Nikolai.Zolotykh@itmm.unn.ru}
\author{A.~Yu.~Chirkov}

\address{
Lobachevsky State University of Nizhni Novgorod,\\  
23 Gagarin Ave., Nizhny Novgorod, 603950, Russia}

\begin{abstract}
In \cite{JOGO}, classes of {\em conic} and {\em discrete conic} functions were introduced.
\NewAdd{In this paper we use the term {\em convic} instead {\em conic}.}
The class of \NewAdd{convic} functions properly includes the classes of convex functions, strictly quasiconvex functions and the class of quasiconvex polynomials. On the other hand, the class of \NewAdd{convic} functions is properly included in the class of quasiconvex functions.
The discrete \NewAdd{convic} function is a discrete analogue of the \NewAdd{convic} function.
In \cite{JOGO}, the lower bound $3^{n-1}\log (2 \rho-1)$ for the number of calls to the comparison oracle needed to find the minimum 
of the discrete \NewAdd{convic} function defined on integer points of some $n$-dimensional ball with radius $\rho$ was obtained.
But the problem of the existence of a polynomial (in $\log\rho$ for fixed $n$)
algorithm for minimizing such functions has remained open. 
In this paper, we answer positively the question of the existence of such an algorithm.
Namely, we propose an algorithm for minimizing discrete \NewAdd{convic} functions that uses $2^{O(n^2 \log n)} \log \rho$ calls to the comparison oracle
\NewAdd{and has $2^{O(n^2 \log n)} \poly(\log \rho)$ bit complexity}.
\end{abstract}

\begin{keyword}
quasiconvex function \sep \NewAdd{convic} function \sep discrete \NewAdd{convic} function \sep comparison oracle \sep integer lattice


\MSC 90C10 \sep 52C07 \sep 90C25 
\end{keyword}

\end{frontmatter}


\section{Introduction} 

A well-known and intensively studied generalization of the integer linear programming problem is
the problem of integer minimization of (quasi-) convex functions subject to (quasi-) convex constraints
\cite{
	Chirkov2003,DadushPeikertVempala2011,Dadush,Heinz2005,Heinz2008,HemmeckeOnnWeismantel2011,HildebrandKoppe2013,Loera2006,OertelWagnerWeismantel2014}. 
The objective function and the constraints in the problem can be specified explicitly or using an oracle.
In \cite{DadushPeikertVempala2011,OertelWagnerWeismantel2014}, a polynomial algorithm in terms of $\log \rho $ 
(when the dimension $n$ is fixed) is proposed, if the domain of the function is contained in a ball of radius $\rho$ and the function is specified by the separation hyperplane oracle.
In \cite{OertelWagnerWeismantel2014}, it is established that a polynomial algorithm in terms of $\log \rho$ (when $n$ is fixed) can be obtained for the next three oracles: the feasibility oracle, the linear integer
optimization oracle and the separation hyperplane oracle.
A new approach in integer convex optimization, based on the concept of a centerpoint, is proposed in \cite{BasuOertel2017}.
In those papers, the lower bound $\Omega (2^n \log \rho)$ for the complexity of the algorithms of integer convex minimization using the separating hyperplane oracle is also established.

The main disadvantage of the oracles mentioned above is the complexity of their implementation.
\NewAdd{In many situations, it is more convenient to use} the comparison oracle and the $0$-order oracle.
For any two points $x$, $y$ in the domain of $f$, the comparison oracle allows 
us to determine which of the two inequalities is satisfied:
$f(x) \le f(y)$ or $f(x) > f(y)$. Given $x$, the $0$-order oracle returns $f(x)$.
Note that, as shown in \cite{JOGO}, the separating hyperplane oracle for the problems under consideration cannot be polynomially reduced to the comparison oracle.

The problems of integer minimization of convex (and close to them) functions defined by the comparison oracle or/and by the $0$-order oracle are considered in \cite{Chirkov2003,JOGO,Veselov2018,ZolotykhChirkov2012}.
In \cite{Chirkov2003}, an algorithm for integer minimization of symmetric strictly quasiconvex functions with $n = 2$ with the number of calls to the comparison oracle at most $2\log_2^2\rho + 22\log_2\rho$ is proposed.
In \cite{Veselov2018}, a similar problem with the $0$-order oracle was considered and an algorithm for integer minimization of such functions was proposed with the number of calls to the oracle at most $ 4\log_2 \rho$.
In addition, the lower bound $1.44 \log_2 \rho -2 $ for this problem was obtained in \cite{Veselov2018}.

The paper \cite{JOGO} discusses the possibility of extending the class of functions
for which the integer optimization problem \NewAdd{with} the comparison oracle 
can still be solved in polynomial in $\log \rho$ time for any fixed $n$.
In particular, in \cite{JOGO}, the classes of conic and discrete conic functions \NewAdd{were} introduced. \NewAdd{The term {\em conic} in this context led to confusion and misunderstanding, so here, instead of {\em conic}, we will use the invented word {\em convic} (derived from {\em conic} and {\em convex}).} The definitions of convic and discrete convic functions see in Section \ref{sec_def}.

The class of \NewAdd{convic} functions contains, as proper subclasses, the following important classes of functions: convex functions, strictly quasiconvex functions, and quasiconvex polynomials.
The class of discrete \NewAdd{convic} functions is a discrete analogue of the class of \NewAdd{convic} functions.

In \cite{JOGO}, an algorithm for minimizing \NewAdd{convic} functions using 
$2^{O(n \log n)} \log \rho$ calls to the comparison oracle is proposed.
But the problem of the existence of a polynomial (in $\log\rho$ for fixed $n$)
algorithm for minimizing {\em discrete} \NewAdd{convic} functions has remained open.
\NewAdd{We note that the algorithm in \cite{JOGO} is not applied to discrete \NewAdd{convic}
functions because it may appeal to the oracle at fractional points. Now we deal with discrete \NewAdd{convic} functions and have to call to the oracle at only integer points.}
On the other hand, in \cite{JOGO}, generalizing results of \cite{ZolotykhChirkov2012}, a lower bound $3^{n-1}\log (2 \rho-1)$ for this problem was obtained.

In this paper, we answer positively to the question of the existence of a polynomial algorithm for minimizing {\em discrete} \NewAdd{convic} functions using the comparison oracle.
Namely, we propose such a minimization algorithm with the number of calls to the oracle at most \NewAdd{$2^{O(n^2 \log n)} \log \rho$ and bit complexity 
$2^{O(n^2 \log n)} \poly(\log \rho)$}.

As in \cite{DadushPeikertVempala2011,HildebrandKoppe2013,OertelWagnerWeismantel2014} 
when constructing our algorithm, we use 
the modified Lenstra method ($\beta$-rounding algorithm) for solving the feasibility problem
described in general form in \cite{Grotschel, Schrijver}. 
It uses two main ideas: the ellipsoid method \cite{NemirovskyYudin1983}
and the lattice basis reduction \cite{LLL1982}. 
Since, in the problems under consideration, there is no function defined analytically, these problems cannot be reduced to the problem of satisfiability. Nevertheless, due to the properties of discrete \NewAdd{convic} functions, the $\beta$-rounding procedure can be applied here.
Also note that in the papers cited above, the separation oracle is used, and the opportunity to ask questions to the oracle at any rational points of the domain is essentially used.
The difference of our work, among others, is that it is enough for our oracle to ask queries only at integer points. 
This allows one to work with partially defined functions, as well as with functions for which the separation oracle is complex. 
We achieve this due to a more subtle use of the properties of the reduced basis and the $\beta$-rounding algorithm.

The paper is organized as follows.
Section \ref{sec_def} introduces the necessary definitions and notation.
Section \ref{sec_beta_rounding} contains a general description of the $\beta$-rounding algorithm.
In Section \ref{sec_hyperplain}, the equations of the new ellipsoid and the cut-off hyperplane at the current iteration of the $\beta$-rounding algorithm are derived.
Section \ref{sec_algorithm} contains a description of the main algorithm for minimizing discrete \NewAdd{convic} functions and a bound for the algorithm complexity.

\section{Main definitions and notations\label{sec_def}}

Let $X$ be a subset of $\RR^n$.
Denote by $\linh X$, $\cone X$ and $\conv X$ the linear, conic and convex hulls of $X$, respectively.

A function $f:~ \RR^n \to \RR$ is called {\em quasiconvex}, if for any $c$ the set
$M_c = \{x\in\RR^n: f(x)\le c\}$ is convex.
Let $D$ be a discrete set in $\RR^n$.
\NewAdd{A function $f:~ D \to \RR$ is called {\em discrete quasiconvex}, if for any $c$ the set	$M_c = \{x\in D : f(x)\le c\}$ is {\em discrete convex}, i.e. $\conv(M_c)\cap D  = M_c$.}

A function $f:~ \RR^n \to \RR$ is called {\em \NewAdd{convic}}, if for any $y\in\RR^n$ and for any $z\in\RR^n$, 
belonging to the set 
$y + \cone\{y - x:~ f(x) \le f(y),~ x \in \RR^n \}$, it holds that $f(y) \le f(z)$ \cite{JOGO}.

A function $f:~   D \to \RR$ is called {\em discrete \NewAdd{convic}}, if for any $y\in D$ and any $z\in D$, belonging to the set
\NewAdd{$y + \cone\{y - x:~ f(x) \le f(y), ~x\in D \}$}, it holds that $f(y) \le f(z)$ \cite{JOGO};
see Fig.\,\ref{discrete_convic_fig}.

\begin{figure}\label{discrete_convic_fig}
	\centering
	\includegraphics[clip,scale=0.38]{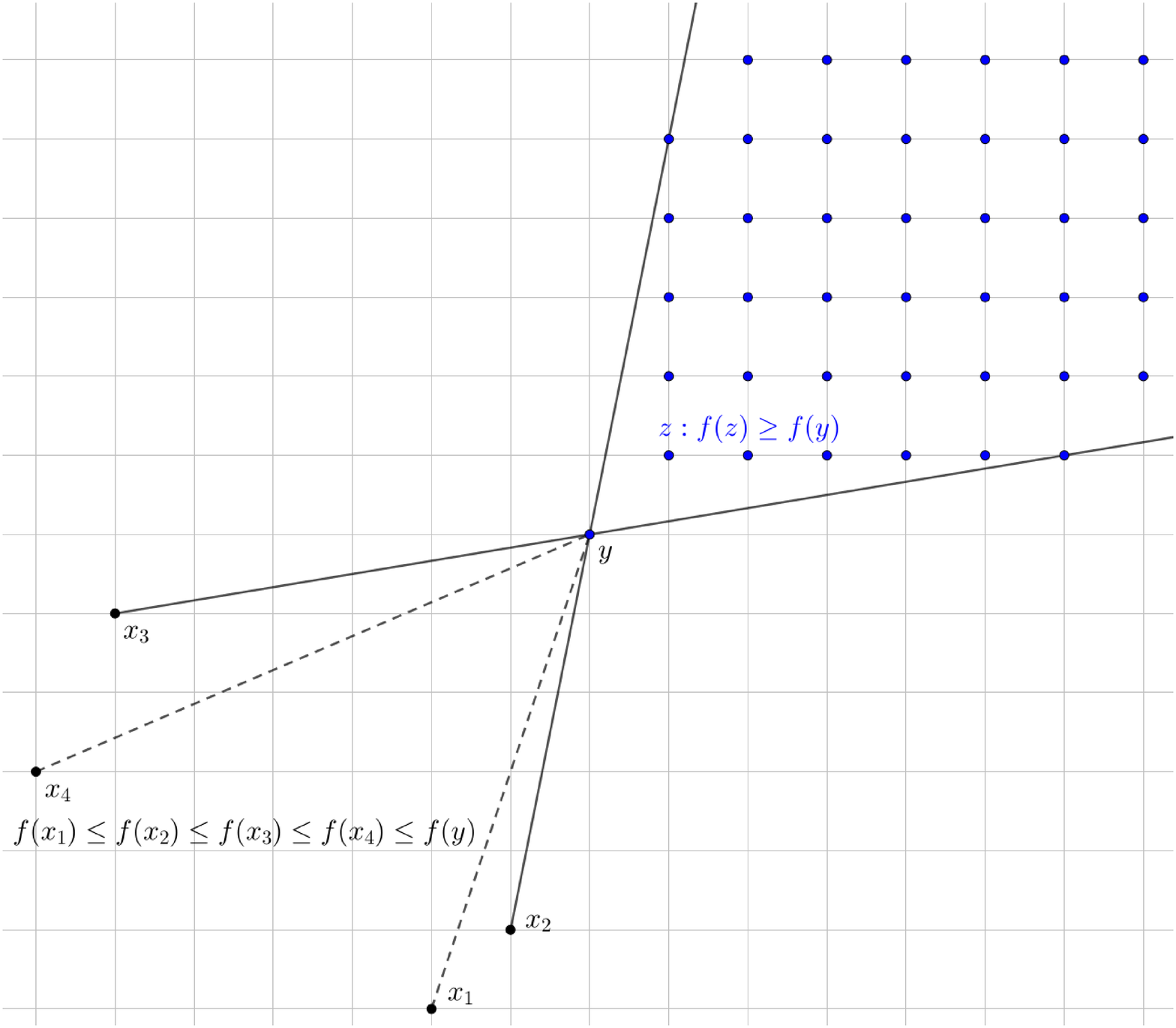}
	\caption{An illustration of the discrete \NewAdd{convic} function definition. Here, we have $f : \ZZ^2 \to \RR$ and $\max_{i} f(x_i) \leq f(y)$.}
\end{figure}

In this paper, we consider the problem of minimization of a discrete \NewAdd{convic} function defined on a set $D = \ZZ^n \cap B_{\rho}$, where $B_{\rho}$ is the ball of radius $\rho$, centered at the origin

\NewAdd{	Note that the classes of \NewAdd{convic} functions and discrete \NewAdd{convic} functions are quite wide and natural. Here we give a few arguments in support of this thesis \cite{JOGO}.
	
As already mentioned, the class of (discrete) \NewAdd{convic} functions contains the class of (discrete) convex functions and the class of (discrete) strictly quasi-convex functions. On the other hand, there exist \NewAdd{convic} functions that are not convex nor strictly quasiconvex, e.g., $\max\{1,\, \sqrt{x}\}$.
Note that the maximum of \NewAdd{convic} functions is \NewAdd{convic}. 
If $f$ is a non-decreasing \NewAdd{convic} function and $g$ is \NewAdd{convic} then $f(g(x))$ is \NewAdd{convic}. 
Any affine transformation of coordinates transforms a \NewAdd{convic} function into a \NewAdd{convic} one.

The problem of minimization of $f(x)$, s.t. $g_i(x) \leq 0$ ($i = 1,2,\dots,m$),  $x \in \ZZ^n$, where $f$, $g_i$ are \NewAdd{convic} (or discrete \NewAdd{convic}), can be reduced to the problem of integer minimization of the \NewAdd{convic} (or, respectively, discrete \NewAdd{convic}) function 
$$
\max\{0,\, f(x), \, M\cdot g_1(x), \dots, M\cdot g_m(x)\},
$$
where the positive constant $M$ is sufficiently large and in many cases can be simply computed from the input of the problem. 
In particular, the integer linear programming problem can be reduced to the problem of minimizing a discrete \NewAdd{convic} function. Note that using lexicography it is possible to do this reduction without using constant $M$ \cite{JOGO}.

Note that not every discrete \NewAdd{convic} function can be extended to a \NewAdd{convic} one \cite{JOGO}. On the other hand, if it is even extended, we may not be able to find the extension, if the function is given by the oracle or in other cases.
For example, the function $f(x)=2\rho^3 x_2-x_1^3$ in the ball $B_{\rho}$, $\rho>1$, is not convex, but the induced function defined in integer points of the ball is discrete \NewAdd{convic}. Hence, the algorithm in \cite{JOGO} is not applied here and a new algorithm for minimizing discrete \NewAdd{convic} functions is required.
}

For convenience of our further exposition, we reduce the problem under consideration to the problem of finding the lattice vector which is minimum with respect to the linear order given by a discrete \NewAdd{convic} function.

Note that any function $f: ~ D \to \RR$ induces some linear order on the set $D$:
$$
x \preceq y \qquad \Leftrightarrow \qquad f(x) \le f(y).
$$
Moreover, instead of considering the functions $f$, we can investigate special linear orders defined on the set $D$.

Let $L$ be a lattice in $\ZZ^n$.
Consider a linear order $\preceq$ on the set $D = L \cap B_{\rho}$ with the following property:
for every $z \in y + \cone \{y-x: ~ x \preceq y \}$, it holds that $y \preceq z$.
There is an oracle capable for any $x, y \in D$ to verify the truth of the statement $x \preceq y$.
It is required to find a minimum point with respect to this order.

\section{The $\beta$-rounding algorithm}\label{sec_beta_rounding}

In this section, we give a general description of Lenstra's method (see \cite{Grotschel,Schrijver}).
Its main idea is constructing {\em $\beta$-rounding} of the set $M$, i.e. constructing the ellipsoid 
$$
E(A,a)=\{x\in \QQ^n:~ (x-a)^{\top}A^{-1}(x-a)\le 1\},
$$ 
such that 
\begin{equation}
E(\beta^2A,a)\subseteq M\subseteq E(A,a).\label{l0} 
\end{equation}

If $E(A,a)$ is sufficiently ``large'', then $E(\beta^2A,a)$ contains an integer point belonging to $M$. 

If  $E(A,a)$ is ``small'', then one can specify a number $k(\beta)$ that does not depend on $M$, 
such that all integer points in $M$ are located on at most $k(\beta)$ hyperplanes, 
therefore, the $n$-dimensional problem reduces to $k(\beta)$ problems in dimension $n-1$, and the recursion can be applied. 

Given a sequence of vectors 
\begin{equation}
c_0,~c_1,~\dots \label{c}
\end{equation}  
and numbers $R$, $\epsilon$, 
the algorithm for $\beta$-rounding of the set $M$ 
finds the sequence 
$$
(A_0,a_0)=(R^2I,0),~(A_1,a_1),~\dots
$$ 
such that $E(A_{k+1},a_{k+1})$ is the ellipsoid with minimum volume containing the set 
$\{x\in E(A_k,a_k):~c^{\top}_kx \le c^{\top}_ka_k+\beta\sqrt{c^{\top}A_kc} \}.$

The algorithm finishes its work at the $k$-th step if at least one of the following conditions is fulfilled:
\begin{enumerate}
	\item[1)] the sequence (\ref{c}) is terminated;

	\item[2)] the pair $(A_k,a_k)$ satisfies (\ref{l0});

	\item[3)] $\det A_k\le \epsilon$. 
\end{enumerate}

For $\beta=1/(n+1)$, the number of iterations of the algorithm is at most 
\begin{equation}
N_{\epsilon}=5n(n+1)^2|\log(\epsilon)| + 5n^2 (n + 1)^2|\log(2R)| + \log(n + 1)+1 \label{N}
\end{equation}
(see Theorem~3.3.9 in \cite{Grotschel}).

\section{The cutting hyperplane construction}\label{sec_hyperplain}

To use the $\beta$-rounding procedure one must be able to find the cutting plane for any ``sufficiently large'' ellipsoid
$$
E(A,a)=\{x\in \QQ^n:~ (x-a)^{\top}A^{-1}(x-a)\le 1\}.
$$
We show how to do this.

Here we use the ideas of shallow cuts introduced by Nemirovsky and Yudin \cite{NemirovskyYudin1983}.

We will use the dot product $(x,y)=x^{\top}A^{-1}y$ and the induced norm $\|x\|=\sqrt{(x,x)}$. 
In such a metric, the ellipsoid $x^{\top}A^{-1}x\le r^2$ is a ball $B_r$.

\begin{lemma}\label{lemma1}%
Suppose that $r<R<1$, a point $x'$ belongs to the boundary of $B_R$, and
$K'=x'+\cone\{x'-x:~ x\in B_r\}$.
Then for any point $x\in B_1\setminus K'$ it holds that
$$
(x',x)\le r^2+\sqrt{(1-r^2)(R^2-r^2)}.
$$
\end{lemma}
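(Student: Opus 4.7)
The plan is to reduce the statement to a planar optimization by decomposing $x$ along $x'$ and its orthogonal complement in the $A^{-1}$-inner product, and by recognizing $K'$ as a circular cone. Write $x = \alpha x' + v$ with $(x', v) = 0$. Then $(x', x) = \alpha R^2$ and $\|x\|^2 = \alpha^2 R^2 + \|v\|^2 \le 1$, so it suffices to bound $\alpha$ in terms of $\|v\|$.

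The key geometric observation is that $\{x' - y : y \in B_r\}$ is the ball of radius $r$ about $x'$ in our metric, and the origin sits at distance $R > r$ outside it. Therefore $\cone\{x' - y : y \in B_r\}$ is the tangent cone from $0$ to this ball, i.e., a circular cone with apex $0$, axis along $x'$, and half-angle $\theta = \arcsin(r/R)$; translating by $x'$, the set $K'$ becomes the circular cone with apex $x'$, axis along $x'$, and half-angle $\theta$. Writing out $\cos \angle(x-x',\, x') \ge \cos \theta$ in the $(\alpha, v)$-coordinates gives the explicit criterion
\[
x \in K' \setminus \{x'\} \iff \alpha > 1 \text{ and } \|v\|^2 \le \frac{r^2 R^2 (\alpha-1)^2}{R^2 - r^2}.
\]

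Finally I would maximize $\alpha R^2$ over $x \in B_1 \setminus K'$. If $\alpha \le 1$, then $(x', x) \le R^2$, and the bound $R^2 \le r^2 + \sqrt{(1-r^2)(R^2-r^2)}$ is immediate from $R < 1$. Otherwise $\alpha > 1$, and the conditions $\|v\|^2 > r^2 R^2 (\alpha-1)^2 / (R^2 - r^2)$ (from $x \notin K'$) and $\|v\|^2 \le 1 - \alpha^2 R^2$ (from $x \in B_1$) combine to force
\[
R^4 (\alpha - 1)^2 + 2 R^2 (R^2 - r^2)(\alpha - 1) - (R^2 - r^2)(1 - R^2) \le 0,
\]
whose positive root for $\alpha - 1$ yields $\alpha R^2 \le r^2 + \sqrt{(1-r^2)(R^2-r^2)}$ after routine algebra. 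I expect the main obstacle to be the clean geometric identification of $K'$ as a circular cone with the correct half-angle in the $A^{-1}$-metric, together with the careful treatment of the degenerate case $x = x'$; once the cone description is in hand, the remainder is an elementary quadratic calculation.
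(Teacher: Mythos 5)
Your proof is correct, and its execution differs from the paper's even though both rest on the same underlying picture. The paper argues synthetically: it takes a boundary ray of $K'$ generated by a tangency point $y \in B_r$ with $(y,\,x'-y)=0$, computes the parameter $t=\sqrt{(1-r^2)/(R^2-r^2)}-1$ at which this ray meets the boundary of $B_1$ at a point $z$, evaluates $(x',z)=r^2+\sqrt{(1-r^2)(R^2-r^2)}$ directly, and then asserts $(x',x)\le(x',z)$ for all $x\in B_1\setminus K'$. You instead set up coordinates $x=\alpha x'+v$ with $(x',v)=0$, identify $K'$ as the circular cone of half-angle $\arcsin(r/R)$ with apex $x'$ and axis $\RR x'$ (your membership criterion $\alpha>1$ and $\|v\|^2\le r^2R^2(\alpha-1)^2/(R^2-r^2)$ checks out), and obtain the bound on $\alpha R^2$ from the positive root of a quadratic; your case $\alpha\le 1$ correctly reduces to $R^2\le r^2+\sqrt{(1-r^2)(R^2-r^2)}$, which holds because $R<1$. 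What your route buys is a fully explicit justification of the containment $B_1\setminus K'\subseteq\{x:(x',x)\le(x',z)\}$, which the paper passes over with the single phrase ``since $z$ lies on the boundary ray of the cone''; what the paper's route buys is brevity and a geometric identification of the extremal point. The only detail worth spelling out in a full write-up of your version is the tangent-cone computation itself: that $\cone$ of the ball $\{x'-x:\ x\in B_r\}=x'+B_r$, which misses the origin because $r<R$, is exactly the circular cone with $\sin\theta=r/R$ in the $A^{-1}$-metric.
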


\begin{proof}
Consider the set $S$ of all points $y\in B_r$, for which the ray $\ell = \{x'+ t\cdot(x'-y):~ t \ge 0\}$ is boundary for the cone $K'$. 
Since the extensions of all such rays touch the ball $B_r$, $(y,x'-y)=0$ for all $y\in S$.

Let us verify that the ray $\ell$ intersects the boundary of the ball $B_1$ at a point $z = x'+ t\cdot(x'-y)$ with 
$$
t=\sqrt{\frac{1-r^2}{R^2-r^2}}-1. 
$$
Indeed, for $y\in S$ we have
\begin{multline*}
\|z\|^2 = \|x'+t\, (x'-y)\|^2=\|(t+1)\, (x'-y)+y\|^2= \\
    = (t+1)^2\|y-x'\|^2 + 2(t+1)\, (x'-y,y) + \|y\|^2 = \\
    = \frac{1-r^2}{R^2-r^2} (\|x'\|^2-\|y\|^2) +r^2 =\frac{1-r^2}{R^2-r^2} (R^2-r^2) +r^2=1.
\end{multline*}

Since $z$ lies on the boundary ray of the cone $K'$, for any point $x\in B_1 \setminus K'$ we have
\begin{multline*}
(x',x) \le (x',z) = \bigl(x',\, x'+t\, (x'-y)\bigr)= \|x'\| + \bigl(x',\, t\,(x'-y)\bigr)= \\
    =R^2+t\,(x'-y,\, x'-y)=r^2+\sqrt{(1-r^2)(R^2-r^2)}.
\end{multline*}
\qed
\end{proof}

\NewAdd{	
\begin{lemma}\label{lemma2}%
Let $g_1,\dots,g_n$ be a basis of a lattice $L$, 
$\theta=\max\limits_{i=1,\dots,n}\|g_i\|$ and $R\ge r+\theta n$, 
then $B_r\subset \conv(B_R\cap L)$.
\end{lemma}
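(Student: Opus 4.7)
The plan is to show that every point in $B_r$ can be expressed as a convex combination of at most $n+1$ lattice points, each of which lies in $B_R$.

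First I would fix an arbitrary $x \in B_r$ and write it in the basis as $x = \sum_{i=1}^n \alpha_i g_i$ with real coefficients. Split $\alpha_i = \lfloor \alpha_i \rfloor + \lambda_i$ with $\lambda_i \in [0,1)$, and let $\sigma$ be a permutation such that $\lambda_{\sigma(1)} \ge \lambda_{\sigma(2)} \ge \dots \ge \lambda_{\sigma(n)}$. Set, by convention, $\lambda_{\sigma(n+1)} = 0$.

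Next, I would introduce candidate lattice points
$$
v_0 = \sum_{i=1}^n \lfloor \alpha_i \rfloor g_i, \qquad v_k = v_0 + \sum_{j=1}^{k} g_{\sigma(j)} \quad (k=1,\dots,n),
$$
together with weights $\mu_0 = 1 - \lambda_{\sigma(1)}$, $\mu_k = \lambda_{\sigma(k)} - \lambda_{\sigma(k+1)}$ for $1 \le k \le n-1$, and $\mu_n = \lambda_{\sigma(n)}$. The sorting ensures $\mu_k \ge 0$ for every $k$, and by telescoping $\sum_{k=0}^n \mu_k = 1$. A direct rearrangement of summations,
$$
\sum_{k=0}^n \mu_k v_k = v_0 + \sum_{j=1}^n \Bigl(\sum_{k=j}^n \mu_k\Bigr) g_{\sigma(j)} = v_0 + \sum_{j=1}^n \lambda_{\sigma(j)} g_{\sigma(j)} = x,
$$
shows that $x$ is a convex combination of $v_0, v_1, \dots, v_n$, all of which belong to $L$.

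Finally I would check that $v_k \in B_R$. Using $x = v_0 + \sum_{j=1}^n \lambda_{\sigma(j)} g_{\sigma(j)}$, one writes
$$
v_k - x = \sum_{j=1}^{k} (1 - \lambda_{\sigma(j)})\, g_{\sigma(j)} \;-\; \sum_{j=k+1}^{n} \lambda_{\sigma(j)}\, g_{\sigma(j)},
$$
where each scalar coefficient lies in $[0,1]$. The triangle inequality in the $A^{-1}$-norm gives $\|v_k - x\| \le n\theta$, hence $\|v_k\| \le \|x\| + n\theta \le r + n\theta \le R$, so $v_k \in B_R \cap L$. Since $x \in B_r$ was arbitrary, this yields $B_r \subset \conv(B_R \cap L)$.

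The only step that needs genuine care is the bookkeeping in the convex-combination identity — everything else is routine triangle-inequality estimation; no real obstacle is expected.
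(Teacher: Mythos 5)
Your proof is correct. The underlying idea is the same as the paper's --- round the coordinates of $x$ in the basis $g_1,\dots,g_n$ to nearby integers and observe that every resulting lattice point is within distance $n\theta$ of $x$ by the triangle inequality --- but the convex-combination step is organized differently. The paper simply notes that $x$ lies in the lattice cell
$\{y_1g_1+\dots+y_ng_n:\ \lfloor x_j\rfloor\le y_j\le\lceil x_j\rceil\}$,
i.e.\ in the convex hull of the $2^n$ vertices of that cell, all of which lie in $B_{r+\theta n}\subseteq B_R$; no explicit weights are needed. You instead build the Kuhn-type simplicial decomposition of that cell, sorting the fractional parts and exhibiting $x$ as a convex combination of only $n+1$ lattice points $v_0,\dots,v_n$ with explicit weights $\mu_k$. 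Your bookkeeping checks out: the $\mu_k$ are nonnegative by the sorting, they telescope to $1$, the partial sums $\sum_{k\ge j}\mu_k=\lambda_{\sigma(j)}$ recover $x$, and each $v_k-x$ has basis coefficients in $[-1,1]$, giving $\|v_k-x\|\le n\theta$ and hence $v_k\in B_R$. What your version buys is a Carath\'eodory-tight representation ($n+1$ points rather than $2^n$), which is unnecessary for the lemma as stated but would matter if one wanted to manipulate the convex combination explicitly; the paper's version is shorter and avoids the permutation argument entirely.
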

}
\begin{proof}
\NewAdd{
		For $x=x_1g_1+\dots+x_ng_n$ we denote 
		$$
		\lfloor x\rceil =
		\{y_1g_1+\dots +y_ng_n:~ \lfloor x_j\rfloor \le y_j\le \lceil x_j\rceil ~ (j=1,\dots,n)\}.
		$$
}
If $x\in B_r$, then
$$ 
\lfloor x\rceil\subseteq B_r+ \{\alpha_1 g_1 + \dots + \alpha_n g_n: |\alpha_j|<1 ~ (j=1,\dots,n) \}\subseteq B_{r+\theta n}\subseteq B_R. 
$$
Since $x\in \lfloor x\rceil$, $x\in \conv(B_R\cap L)$.
\qed
\end{proof}

\NewAdd{Denote
		$$
		\beta=\frac1{n+1},\quad
		\sigma=\frac{2\beta^3}{27n},\quad
		R=\frac{\beta}3,\quad
		r=\frac{R\sqrt{1-\beta^2}}{\sqrt{1-2\beta R+R^2}}.
		$$

\begin{lemma}\label{lemma3}%
$R\ge r+\sigma n.$
\end{lemma}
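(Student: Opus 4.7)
The plan is to observe that the target $\sigma n = \frac{2\beta^3}{27n}\cdot n = \frac{2\beta^3}{27}$ can be rewritten cleanly in terms of $R$. Indeed, since $R = \beta/3$, we have $2R^3 = \frac{2\beta^3}{27} = \sigma n$, so the desired inequality $R \ge r + \sigma n$ is equivalent to
$$R - r \ge 2R^3.$$
Dividing by $R > 0$, it suffices to show $r/R \le 1 - 2R^2 = 1 - \frac{2\beta^2}{9}$.

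Next, I would substitute $R = \beta/3$ directly into the denominator of $r$: $1 - 2\beta R + R^2 = 1 - \tfrac{2\beta^2}{3} + \tfrac{\beta^2}{9} = 1 - \tfrac{5\beta^2}{9}$, so
$$\frac{r}{R} = \sqrt{\frac{1-\beta^2}{1 - 5\beta^2/9}}.$$
Since $\beta = 1/(n+1) \le 1/2$, both $r/R$ and $1 - 2\beta^2/9$ are positive, so squaring preserves the direction of the inequality. It thus remains to verify
$$\frac{1-\beta^2}{1-5\beta^2/9} \le \left(1 - \frac{2\beta^2}{9}\right)^2.$$

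Clearing the (positive) denominator and expanding the right-hand side gives
$$1 - \beta^2 \le \left(1 - \frac{4\beta^2}{9} + \frac{4\beta^4}{81}\right)\left(1 - \frac{5\beta^2}{9}\right) = 1 - \beta^2 + \frac{8\beta^4}{27} - \frac{20\beta^6}{729},$$
which collapses to
$$0 \le \frac{\beta^4}{729}\bigl(216 - 20\beta^2\bigr).$$
This holds whenever $\beta^2 \le 10.8$, and in particular for every $\beta = 1/(n+1)$ with $n \ge 1$. The chain of equivalent inequalities then yields the lemma.

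There is no real obstacle here: the statement is a closed-form numerical inequality in $\beta$, and the only mild care is making sure $1 - 2\beta^2/9 > 0$ before squaring (clear since $\beta \le 1/2$) and that the leading $1 - \beta^2$ terms cancel exactly, leaving a manifestly nonnegative residue. If anything deserves attention, it is the algebraic simplification $1 - 2\beta R + R^2 = 1 - 5\beta^2/9$, which is what makes the choice $R = \beta/3$ line up neatly with $\sigma = 2\beta^3/(27n)$ in the first place; this aesthetic fit also suggests that these constants were tuned precisely so that the residual polynomial is a nonnegative multiple of $\beta^4$.
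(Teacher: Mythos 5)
Your proof is correct, and it verifies the same closed-form inequality as the paper, but by a different algebraic route. The paper computes $R-r$ directly: it writes $r=\beta\sqrt{1-\beta^2}/\sqrt{9-5\beta^2}$, rationalizes the difference of square roots via the conjugate, obtaining
$$R-r=\beta\cdot\frac{4\beta^2}{3\sqrt{9-5\beta^2}\,\bigl(\sqrt{9-5\beta^2}+3\sqrt{1-\beta^2}\bigr)},$$
and then bounds the denominator above by $54$ to get $R-r\ge 2\beta^3/27=\sigma n$. You instead normalize by $R$, square (after checking both sides are positive), and clear denominators, reducing everything to the manifestly nonnegative residue $\frac{\beta^4}{729}(216-20\beta^2)$; your algebra checks out, including the identity $2R^3=\sigma n$ and the simplification $1-2\beta R+R^2=1-5\beta^2/9$. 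The paper's conjugate trick avoids squaring and gives an exact expression for the gap with a one-line crude bound, while your version trades that for a routine polynomial expansion; both are equally elementary and complete, and neither requires more than $\beta\le 1/2$.
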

}
\begin{proof}
\begin{multline*}
  R - r = \frac{\beta}{3} - \frac{\beta\sqrt{1-\beta^2}}{3\sqrt{1-2\beta^2/3 + \beta^2/9}} = \\
	      = \frac{\beta}{3} - \frac{\beta\sqrt{1-\beta^2}}{\sqrt{9-5\beta^2}}
				= \beta\cdot\frac{\sqrt{9-5\beta^2} - 3\sqrt{1-\beta^2}}{3\sqrt{9-5\beta^2}} = \\
				= \beta\cdot\frac{4\beta^2}{3\sqrt{9-5\beta^2}(\sqrt{9-5\beta^2}+3\sqrt{1-\beta^2})}
				\ge \frac{2\beta^3}{27} \ge \sigma n.
\end{multline*}\qed
\end{proof}



%

\NewAdd{The following lemma describes how to construct a shallow cut.}

\NewAdd{
\begin{lemma}\label{lemma5}%
	Let $g_1,\dots,g_n$ be a basis of a lattice $L\subseteq \ZZ^n$, $\theta=\max\limits_{i=1,\dots,n}\|g_i\|$,
	$\theta\le \sigma$.
	Then $B_r\subset \conv(B_R\cap L)$.
	Further, let $x^*$ be the maximum point in $B_R\cap L$ with respect to $\preceq$, 
	a point $x^0$ be the minimum point in $B_1\cap \ZZ^n$ with respect to $\preceq$, and $E$ be the minimum volume ellipsoid containing the set 
	$\{x\in B_1: (x^*,x)\le \beta \|x^*\|\}$.
	Then $x^0\in E$.
\end{lemma}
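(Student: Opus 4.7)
The first assertion $B_r \subset \conv(B_R\cap L)$ is immediate from the preceding lemmas: Lemma~\ref{lemma3} gives $R \ge r + \sigma n$, and since $\theta \le \sigma$ by hypothesis, $R \ge r + \theta n$, so Lemma~\ref{lemma2} applies directly.

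For the second assertion, the key observation is that $E$ contains the half-ball $H := \{x \in B_1 :\, (x^*, x) \le \beta\|x^*\|\}$ by definition, so it suffices to show $x^0 \in H$, i.e.\ $(x^*, x^0) \le \beta\|x^*\|$ (we already have $x^0 \in B_1$ by hypothesis). I plan to introduce the cone
$$
K := x^* + \cone\{x^* - x :\, x \in B_r\}
$$
and argue that every $z \in K \cap L \cap B_\rho$ satisfies $x^* \preceq z$. Indeed, the inclusion $B_r \subset \conv(B_R\cap L)$ just established implies $K \subseteq x^* + \cone\{x^* - x :\, x\in B_R\cap L\}$; since $x^*$ is the $\preceq$-maximum of $B_R\cap L$, every generator $x$ satisfies $x \preceq x^*$, and the defining convic property of $\preceq$ then yields $x^* \preceq z$ for all such $z$.

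I then split on whether $x^0 \in K$. If $x^0 \in K$, combining $x^* \preceq x^0$ with the minimality $x^0 \preceq x^*$ (valid since $x^* \in B_R \subseteq B_1$) forces $x^0 = x^*$, whence $(x^*, x^0) = \|x^*\|^2 \le R\|x^*\| < \beta\|x^*\|$ because $R = \beta/3 < \beta$. If $x^0 \notin K$, I invoke Lemma~\ref{lemma1} with $R' := \|x^*\|$ in place of $R$ (permissible since $r < R' \le R < 1$; the degenerate case $\|x^*\| < r$ collapses because $B_r$ then contains a neighborhood of $x^*$, forcing $K = \RR^n$ and $x^0 \in K$ automatically) to obtain
$$
(x^*, x^0) \le r^2 + \sqrt{(1-r^2)(R'^2 - r^2)}.
$$

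The main obstacle is verifying that this Lemma~\ref{lemma1} bound does not exceed $\beta R'$ on the whole interval $R' \in [r, R]$. This is precisely why $r$ is chosen as $r = R\sqrt{1-\beta^2}/\sqrt{1 - 2\beta R + R^2}$: setting $D := 1 - 2\beta R + R^2$, direct substitution yields the clean identities $r^2 = R^2(1-\beta^2)/D$, $1 - r^2 = (1-\beta R)^2/D$, and $R^2 - r^2 = R^2(R-\beta)^2/D$, from which one reads off exact equality of the two sides at $R' = R$. The left-hand side is concave in $R'$ while $\beta R'$ is linear, and computing the derivative of the left-hand side at $R' = R$ gives $(1-\beta R)/(\beta - R)$, which exceeds $\beta$ by the positive amount $(1-\beta^2)/(\beta - R)$. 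By concavity, having equality and strictly larger slope at the right endpoint forces the left-hand side to lie below the linear function on the whole interval $[r, R]$, completing the bound and hence the lemma.
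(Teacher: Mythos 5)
Your proof is correct, and its skeleton --- the cone generated by $B_r$, the reduction to Lemma~\ref{lemma1} via $B_r\subset\conv(B_R\cap L)$ (itself from Lemmas~\ref{lemma2} and~\ref{lemma3}), and the exact identity at radius $R$ forced by the choice of $r$ --- coincides with the paper's. The one genuine difference is in how Lemma~\ref{lemma1} is invoked. The paper does not anchor the cut cone at $x^*$: it introduces the auxiliary point $x'$ where the ray $\{tx^*:t\ge 0\}$ meets the boundary of $B_R$, sets $K'=x'+\cone\{x'-x:\,x\in B_r\}$, checks $K'\subseteq K^*:=x^*+\cone\{x^*-x:\,x\in B_r\}\subseteq K_0$, and applies Lemma~\ref{lemma1} verbatim at radius $R$. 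Since $(x^*,x)/\|x^*\|=(x',x)/\|x'\|=(x',x)/R$, the single computation at the right endpoint (your ``exact equality at $R'=R$'') is all that is needed; the bound $\beta$ comes out identically and no uniformity over $R'\in(r,R]$ has to be established. Your version anchors the cone at $x^*$ itself, which is geometrically more natural but forces you to apply Lemma~\ref{lemma1} at the a priori unknown radius $\|x^*\|$ and then verify the bound for every $R'\in(r,R]$ by the tangent-line argument; that argument is sound (the derivative value $(1-\beta R)/(\beta-R)>\beta$ and the concavity of $\sqrt{R'^2-r^2}$ both check out, and a concave function lying under its tangent at $R$, where that tangent has slope exceeding $\beta$ and value $\beta R$, is indeed below $\beta R'$ for $R'<R$), but it is extra work that the paper's homogeneity trick avoids. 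Your explicit treatment of the case $x^0\in K$ and of $\|x^*\|<r$ is in fact more careful than the paper, which simply asserts that the minimum point lies in $B_1\setminus K'$; the only loose end is the boundary value $\|x^*\|=r$, where the hypothesis $r<R'$ of Lemma~\ref{lemma1} fails and one should note that the required bound degenerates to $r^2\le\beta r$, which holds since $r<R=\beta/3$.
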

}

\begin{proof}
\NewAdd{The inclusion $B_r\subset \conv(B_R\cap L)$ follows from lemmas~\ref{lemma2} and \ref{lemma3}.}

Let $x'$ be the intersection point of the ray $\{tx^*: ~ t \ge 0 \}$ and the boundary of the ball $B_R$.

Consider the following three cones (see Fig.\,\ref{cutting_lemma_fig}):
\begin{gather*}
K_0 = x^* + \cone\{x^*-x:~ \NewAdd{x\in B_R \cap L\},} \\
K^* = x^* + \cone\{x^*-x:~ x\in B_r \}, \quad
K'  = x'  + \cone\{x'-x:~ x\in B_r \}.
\end{gather*}
It is obvious that $K' \subseteq K^*$
\NewAdd{
and
 $B_r \subset  \conv(B_R\cap L)  \subseteq \conv(B_R\cap\ZZ^n)$,
hence $K^* \subseteq K_0$.
}

Since $x^*$ is the maximum point in $B_R\cap L$ with respect to $\preceq$, 
we derive $x^* \preceq x$ for all $x\in K_0$. 
But the cone $K_0$ contains $K'$, hence the minimum point in $B_1\cap L$ with respect to $\preceq$ 
belongs to $B_1\setminus K'$.
By Lemma~\ref{lemma1}, for any point $x\in B_1\setminus K'$ we have
$(x',x)\le r^2+\sqrt{(1-r^2)(R^2-r^2)}$,
hence
\begin{multline*}
    \frac{(x^*,x)}{\|x^*\|} = \frac{(x',x)}{\|x'\|} = \frac{(x',x)}{R} \le \frac{r^2+\sqrt{(1-r^2)(R^2-r^2)}}{R} = \\
		  = \frac{R(1-\beta^2)}{1 - 2\beta R + R^2} + \frac{\sqrt{(1-2\beta R + R^2\beta^2)(R^4 + R^2\beta^2-2\beta R^3)}}{R(1-2\beta R + R^2)} = \\
		  = \frac{R(1-\beta^2)}{1 - 2\beta R + R^2} + \frac{(1-\beta R)(\beta - R)}{1 - 2\beta R + R^2} 
		  = \frac{\beta(1 - 2\beta R + R^2)}{1 - 2\beta R + R^2} = \beta.
\end{multline*}
\qed
\end{proof}

\begin{figure}[h]\label{cutting_lemma_fig}
	\centering
	\includegraphics[scale=0.31]{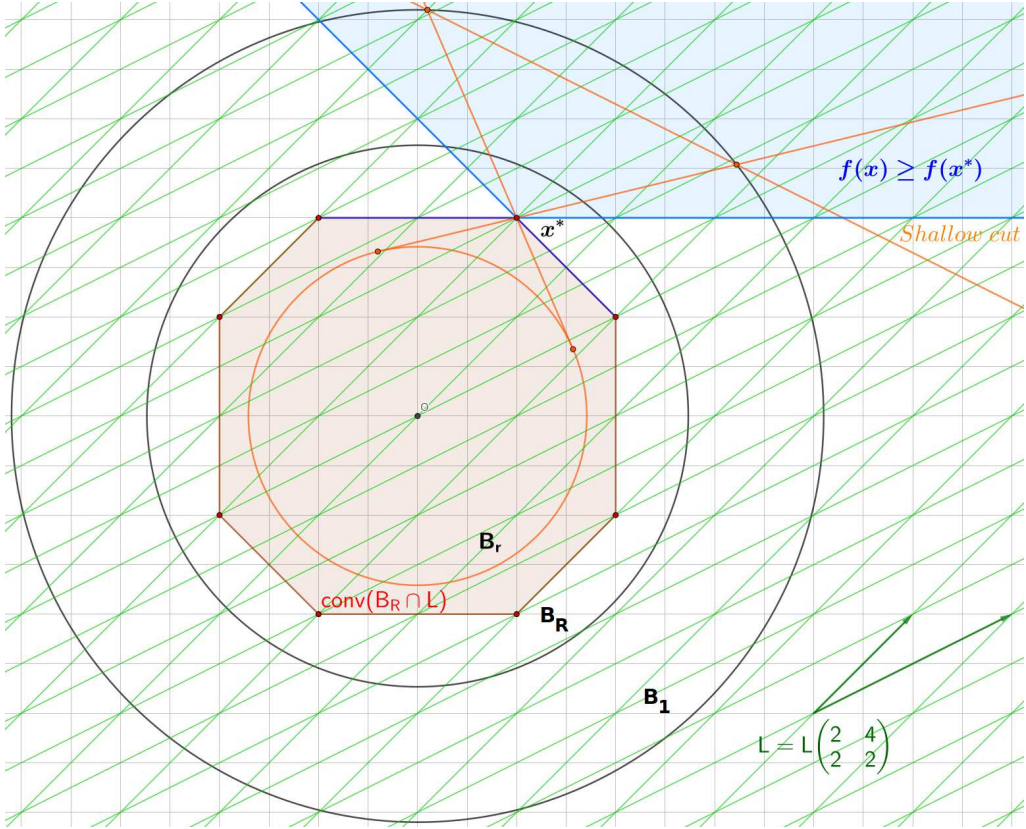}
	\caption{An illustration for Lemma \ref{lemma5}. Here, we have $f : \ZZ^2 \to \RR$ and the lattice $L$ with the basis $\dbinom{2}{2}$ and $\dbinom{4}{2}$. For integral points from the blue cone we have $f(x) \geq f(x^*)$, the shallow cut is denoted by the orange line.}
\end{figure}


Let \NewAdd{$G = (g_1, g_2, \dots, g_n)$ be a basis of a lattice $L\subseteq\ZZ^n$. 
Denote by 
$$
\defect G =\frac{1}{\det L} \prod_{i=1}^n \| g_i\|_{A^{-1}}
$$  
the orthogonality defect of $G$.}



\NewAdd{The following lemma helps us to estimate the time need to construct the shallow cut.}

\NewAdd{
\begin{lemma}\label{lemma6}
Let $G = (g_1, g_2, \dots, g_n)$ be a basis of $\ZZ^n$, $\theta=\max\limits_{i=1,\dots,n}\|g_i\|$ and $\theta \le \sigma$. 
Let $x^0$ be the minimum point in $B_1 \cap \ZZ^n$ with respect to $\preceq$. Then, the shallow cut $(x^*,x) \leq \beta \|x^*\|$ with the property 
$$
x^0 \in \{x \in B_1 : (x^*, x) \leq \beta \|x^*\|\}
$$ 
can be computed by an algorithm with the oracle complexity $2^{O(n \log n)} \defect G$ and the bit complexity $2^{O(n \log n)} \defect(G) \poly(s)$, where $s = \size G + \size A$ is the input size.
\end{lemma}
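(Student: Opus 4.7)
The plan is to find the maximum point $x^*$ of $B_R\cap L$ with respect to $\preceq$ and then to output the shallow cut $(x^*,x)\le\beta\|x^*\|$. Since the hypothesis $\theta\le\sigma$ combined with Lemma~\ref{lemma3} gives $R\ge r+\sigma n\ge r+\theta n$, Lemma~\ref{lemma2} yields $B_r\subset \conv(B_R\cap L)$, which is exactly what is required to invoke Lemma~\ref{lemma5}; that lemma then guarantees that the resulting cut contains $x^0$.

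To compute $x^*$, I would enumerate the set $B_R\cap L$ directly through the basis $G$. Writing $x=\sum_{i=1}^n y_i g_i$ with $y\in\ZZ^n$, the condition $x\in B_R$ translates into the quadratic integer inequality $y^{\top}(G^{\top} A^{-1} G)y\le R^2$. I would solve it by a Fincke-Pohst-type depth-first enumeration organized along the Gram-Schmidt decomposition of $G$ with respect to the $A^{-1}$ inner product. At each lattice point produced by the enumeration, a single query to the comparison oracle suffices to update the running maximum; when the enumeration terminates, that running maximum is $x^*$, and the shallow cut is then formed directly from its coordinates.

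The main quantitative point, and the principal obstacle, is to bound the size of the enumeration tree by $2^{O(n\log n)}\defect G$. The standard Fincke-Pohst interval estimate bounds this by $\prod_k(1+2R/\|g_k^*\|_{A^{-1}})$, where $g_k^*$ denotes the $k$-th Gram-Schmidt vector. Because $G$ is not assumed to be reduced, the individual lengths $\|g_k^*\|_{A^{-1}}$ may be very small, so one cannot argue factor by factor; instead, I would combine the product identity $\prod_k\|g_k\|_{A^{-1}}=\defect(G)\cdot\det L$, Hadamard's inequality in the $A^{-1}$ metric, and the specific value $R=\beta/3=O(1/n)$ in a telescoping argument collapsing the product to $n^{O(n)}\defect(G)$. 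The hypothesis $\theta\le\sigma$ is precisely what is needed to keep the telescoping loss under control. The bit complexity then follows by charging $\poly(s)$ per arithmetic operation, per Gram-Schmidt update, and per oracle call along the traversal.
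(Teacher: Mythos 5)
There is a genuine gap, and it is precisely at the point you flag as ``the principal obstacle.'' Your plan is to enumerate $B_R\cap L$ itself and take its $\preceq$-maximum, hoping to bound the enumeration by $2^{O(n\log n)}\defect G$ via a telescoping argument. No such bound exists: the cardinality of $B_R\cap L$ is not controlled by $\defect G$ at all. Take $G=I$ (so $\defect G=1$) and $A^{-1}=\epsilon^2 I$; then $\theta=\epsilon\le\sigma$ for small $\epsilon$, yet $B_R\cap\ZZ^n$ is the set of integer points in a Euclidean ball of radius $R/\epsilon$ and has $\Theta\bigl((R/\epsilon)^n\bigr)$ elements, unbounded as $\epsilon\to 0$. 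Since finding the $\preceq$-maximum of a set with a comparison oracle requires querying every element, no reorganization of the Fincke--Pohst traversal can rescue this; the set being enumerated is simply too large. Note also that the hypothesis $\theta\le\sigma$ is an \emph{upper} bound on the $\|g_i\|$, so it pushes in the wrong direction: it permits the $\|g_i^*\|$ to be arbitrarily small, which is exactly what blows up the factor $\prod_k\bigl(1+2R/\|g_k^*\|\bigr)$.

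The paper's proof avoids this by changing the lattice, not the enumeration. It sets $d_i=\lfloor\sigma/\|g_i\|\rfloor$ and enumerates only the coarser sublattice generated by $d_1g_1,\dots,d_ng_n$, whose generators satisfy $\sigma/2\le\|d_ig_i\|\le\sigma$. The count $\prod_i\bigl(2R/(d_i\|g_i^*\|)+1\bigr)$ then converts, at the cost of exactly one factor $\defect G$ (from $\prod\|g_i\|/\|g_i^*\|$), into $(8R/\sigma)^n\defect G=2^{O(n\log n)}\defect G$. Crucially, this substitution is legitimate for correctness: Lemma~\ref{lemma5} only requires that the lattice used to produce $x^*$ have a basis of vectors of norm at most $\sigma$, and its conclusion about $x^0\in B_1\cap\ZZ^n$ is independent of which such sublattice is used. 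So $x^*$ need not be the maximum of $B_R\cap L$ over the original lattice --- the maximum over the rescaled sublattice already yields a valid shallow cut. Your correctness paragraph (Lemmas~\ref{lemma2}, \ref{lemma3}, \ref{lemma5}) is fine; what is missing is this rescaling idea, without which the claimed oracle complexity cannot be achieved.
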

}
\begin{proof}
Let $G^*=(g^*_1,g^*_2,\dots,g^*_n)$ be the Gram--Schmidt basis corresponding to the lattice basis $G$. Let $T$ be the upper triangular matrix such that $G=G^*T$ and let 
$$
d_i=\left\lfloor \frac{\sigma}{\|g_i\|} \right\rfloor \qquad (i=1,\dots,n).
$$
It is clear that we have $\|d_i g_i\| \leq \sigma$.

Consider the lattice generated by the vectors $d_i h_i$ $(i = 1, \dots, n)$. 
To find the maximum point $x^*$ in $B_R\cap L$ with respect to $\preceq$ it is enough to enumerate all integer solutions to the inequality $x^{\top}A^{-1} x \le R^2$. Or, after replacement $x=G \diag(d_1,\dots,d_n)y=G^*T \diag(d_1,\dots,d_n)y$, it is enough to enumerate all integer solutions to the inequality 
$$
\sum_{i=1}^{n} d^2_i||g^*_i||^2(y_i+t_{i,i+1}y_{i+1}+\dots+t_{i,n}y_n)^2\le R^2.
$$


They are at most
\begin{multline} 
 	\prod_{i=1}^n \left(\frac{2R}{d_i\|g^*_i\|}+1\right)\le \prod_{i=1}^n \left(\frac{4R}{d_i\|g^*_i\|}\right)=\\
 	=\prod_{i=1}^n \left(\frac{4R}{d_i\|g_i\|}\right)\prod_{i=1}^n \left(\frac{\|g_i\|}{\|g^*_i\|}\right)
 	=\prod_{i=1}^n \left(\frac{4R}{d_i\|g_i\|}\right) \defect G\le \\ \le \left(\frac{8R}{\sigma}\right)^n \defect G\le 
 	\bigl(36n(n+1)^2+1\bigr)^n n^n
 	=2^{O(n\log n)} \defect G
 	\label{l6} 
\end{multline}  
integer points. 
This procedure takes $2^{O(n\log n)} \defect G$ calls to the oracle. 
 In deriving the inequality in (\ref{l6}) we used that
 $$
 d_i \cdot \|g_i\| = \| g_i\| \cdot \left\lfloor\frac{\sigma}{\| g_i\|}\right\rfloor \ge \frac{\sigma}{2}.
 $$
 The last inequality here follows from the inequalities
 $$
 \left\lfloor \frac{u}{v}\right\rfloor \ge \left\{\frac{u}{v}\right\} 
 \quad\Rightarrow\quad
 2\left\lfloor\frac{u}{v}\right\rfloor\ge \frac{u}{v}
 \quad\Rightarrow\quad
 v\left\lfloor\frac{u}{v}\right\rfloor \ge \frac{u}{2},
 $$ 
 which are valid for $u\ge v>0$. 
\end{proof}
	
\NewAdd{
\begin{note} For the sake of simplicity, we considered only ellipsoids with centers in $0$. However, the enumeration method, described in Lemma \ref{lemma6}, can be applied for ellipsoids with arbitrary centers $(x-a)^\top A^{-1} (x-a) \le R^2$.  
\end{note} 
}

\section{The algorithm}\label{sec_algorithm}
 
\NewAdd{
 Before we give the main minimization algorithm we describe a preprocessing procedure that will be very helpful, when we need to reduce the dimension of an initial problem and to find a short lattice basis in the reduced space.} 
\NewAdd{Here we fully follow Dadush' IP-Preprocessing Algorithm \cite[pp.\,223--225]{Dadush}.}

%

\NewAdd{ 

\begin{lemma}\label{algorithm1} Let $L$ be an $n$-dimensional lattice given by a basis $B \in \QQ^{n \times n}$, and $H = \{x \in \RR^n : A x = b\}$ be an affine subspace, where $A \in \QQ^{m \times n}$ and $b \in \QQ^m$. 
Let also $E = a_0 + B_{\rho}$, where $a_0 \in \QQ^n$ and $\rho \in \QQ_+$. Then, there is an algorithm $\PrepAlgName$ with the bit complexity $2^{O(n \log n)} \poly(s)$, where $s$ is input size, which either decides that $E \cap L \cap H = \emptyset$ or returns
\begin{enumerate}
\item[\rm 1)] a shift $p \in L$,
\item[\rm 2)] a sublattice $L' \subseteq L$, $\dim L' = k \leq n$, given by a basis $b_1',\dots,b_k'$,
\item[\rm 3)] a vector $a_0' \in \linh L'$ and a radius $\rho'$, 
      $0 < \rho' \leq \rho$,
\end{enumerate} 
satisfying the following properties
\begin{enumerate}
\item[\rm 1)] $E \cap L \cap H = (E' \cap L') + p$, where 
$E' = \{x \in \linh L' :~ \|x - a_0'\| \leq \rho' \}$,
\item[\rm 2)] $\max\limits_{1 \leq i \leq k} \|b_i'\| \leq 2 \sqrt{k} R'$,
\item[\rm 3)] $a_0'$, $\rho'$, $b_1', \dots, b_k'$ and $p$ have polynomial in $s$ encodings.
\end{enumerate}
\end{lemma}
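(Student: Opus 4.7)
The plan is to adapt Dadush's IP-preprocessing procedure \cite[pp.\,223--225]{Dadush}, which proceeds in three phases: reducing the affine--lattice intersection to a linear one via Hermite Normal Form, restricting the input ball to the affine span of the resulting sublattice, and iteratively reducing the basis while lowering the dimension until the short-basis invariant in item~2 is established.

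First, I would run an HNF computation on the matrix $AB$, where $B$ is the given basis of $L$. This either certifies infeasibility of $A(Bz) = b$ over $\ZZ$ (so $E \cap L \cap H = \emptyset$ and we stop), or returns a particular lattice point $p \in L \cap H$ together with a basis of the kernel sublattice $L_1 := L \cap \ker A$, giving $L \cap H = p + L_1$. Next, decompose $a_0 - p = a_0^{(1)} + w$ orthogonally with $a_0^{(1)} \in \linh L_1$ and $w \perp \linh L_1$; since $\|y + p - a_0\|^2 = \|y - a_0^{(1)}\|^2 + \|w\|^2$ for every $y \in \linh L_1$, either $\|w\| > \rho$ (empty intersection, stop) or we set $\rho_1 := \sqrt{\rho^2 - \|w\|^2} \leq \rho$. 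The task has now been reduced to locating points of $L_1$ inside the ball $E_1 = a_0^{(1)} + B_{\rho_1}$ within $\linh L_1$, and both steps run in polynomial bit complexity.

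Then compute a Korkine--Zolotarev (KZ) reduced basis $c_1, \dots, c_{n_1}$ of $L_1$; in dimension $n_1 \leq n$ this costs $2^{O(n \log n)} \poly(s)$ time and satisfies $\|c_i\| \leq \sqrt{i}\,\lambda_i(L_1)$. If $\|c_{n_1}\| \leq 2\sqrt{n_1}\,\rho_1$, output $L' := L_1$, $a_0' := a_0^{(1)}$, $\rho' := \rho_1$, and $b_i' := c_i$; the invariant in item~2 is then immediate. Otherwise $\lambda_{n_1}(L_1) > 2\rho_1$. Let $L_2$ denote the sublattice of $L_1$ spanned by all lattice vectors of length $\leq 2\rho_1$; it has rank strictly less than $n_1$. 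Since any two points of $L_1 \cap E_1$ differ by a lattice vector of length at most $2\rho_1$, all points of $L_1 \cap E_1$ lie in a single coset $p^{(1)} + L_2$. Identify that coset (by a closest-vector computation, also doable in $2^{O(n \log n)}$ enumeration time in fixed dimension), absorb $p^{(1)}$ into $p$, orthogonally project the ball to $\linh L_2$ exactly as in the second phase to obtain a new center $a_0^{(2)} \in \linh L_2$ and radius $\rho_2 \leq \rho_1$, replace $L_1 \leftarrow L_2$, and recurse.

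The recursion halts after at most $n$ rounds because $\dim L_1$ strictly decreases, so the final output has dimension $k \leq n$. Polynomial bit-size bounds in item~3 follow from polynomial growth at each HNF, orthogonal projection and KZ-reduction step; combining the $2^{O(n \log n)}$ cost of KZ reduction and closest-vector enumeration in fixed dimension with the at-most-$n$ recursions gives the overall complexity $2^{O(n \log n)} \poly(s)$. The main obstacle is in the third phase: one must verify that selecting a single coset loses no lattice point of the original $E \cap L \cap H$ (this rests on the diameter bound combined with the strict inequality $\lambda_{n_1} > 2\rho_1$), and that each newly chosen $L_2$, $a_0^{(2)}$, $\rho_2$ admits polynomial-size encodings so that the recursion does not blow up; both follow the template in \cite{Dadush}.
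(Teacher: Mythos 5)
Your proposal reconstructs exactly the argument the paper relies on: the paper's proof of this lemma is literally a citation to Dadush's IP-Preprocessing algorithm \cite[pp.\,223--225]{Dadush}, and your three phases (HNF to reduce $L\cap H$ to a coset $p+L_1$, orthogonal projection of the ball onto $\linh L_1$, then KZ-reduction with coset descent when $\lambda_{n_1}>2\rho_1$) are that same algorithm with the same complexity accounting. One small repair: the branch test should be on $\max_i\|c_i\|\le 2\sqrt{n_1}\,\rho_1$ (or equivalently on $\lambda_{n_1}\le 2\rho_1$) rather than on $\|c_{n_1}\|$ alone, since a KZ basis need not have its longest vector last and passing your test does not yet give item~2 for all $i$; the failure of the corrected test still yields $\lambda_j>2\rho_1$ for some $j$ and hence a strict rank drop, so the rest of your argument is unaffected.
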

}
\begin{proof}
\NewAdd{See~\cite[pp.\,223--225]{Dadush}.}
\qed
\end{proof}

\NewAdd{The algorithm for minimizing a discrete \NewAdd{convic} function is given 
on Fig.\,\ref{algorithm2}.
On the step 6 we construct the reduced Korkin--Zolotarev basis following \cite{Schnorr}.}

\begin{figure}\label{algorithm2}
	\NewAdd{
\algsetup{indent=2em}
\begin{algorithmic}[1]
\REQUIRE The comparison oracle for the order $\preceq$; a lattice $L \subseteq \ZZ^n$; a point $a_0 \in \QQ^n$ and a radius $\rho \in \QQ_+$; a rational affine subspace $H$. 
\ENSURE Return EMPTY if the set $(a_0 + B_{\rho}) \cap L \cap H$ is empty. If it is not, return the minimum point $x^*$ with respect to $\preceq$ in the set 
$(a_0 + B_{\rho}) \cap L \cap H$.
\STATE $(a_0,\rho,L,p) : = \PrepAlgName(a_0,\rho,L,H)$. 
\STATE Set $p$ as the origin, when we call the $\preceq$ comparison oracle.
\STATE $E = \{x \in \linh L :~ \|x - a_0\|_2 \leq \rho\}$, $n := \dim L$, $\beta := \frac{1}{n+1}$, $\sigma:=\frac{2 \beta^3}{27n}$.
\REPEAT
\STATE Suppose that $E = \{x \in \linh L :~ (x - a)^\top A^{-1} (x - a) \leq 1 \}$.
\STATE Find the reduced Korkin--Zolotarev basis $h_i$ $(i=1,\dots,n)$ of the lattice $L$ with respect to the norm $\|x\|_{A^{-1}} = \sqrt{x^\top A^{-1} x}$, defined by the ellipsoid $E$. 
\STATE Find $\tau=\max\limits_{i=1,\dots,n}\|h_i\|_{A^{-1}}$.
\IF{$\tau \leq \sigma$}
\STATE With help of Lemma~\ref{lemma6} construct cutting plain and perform one iteration of $\beta$-rounding algorithm from \cite{Grotschel} for the ellipsoid $E$ with shallow cut $(x^*,x)_{A^{-1}} \leq \beta \|x^*\|_{A^{-1}}$. 
Let $E$ be the resulting ellipsoid. 
\ENDIF
\UNTIL{$\tau > \sigma$}
\STATE Let $H=(h_1,..,h_{n-1})$. Find irreducible integer vector $d \in \linh L$, such that $d^{\top} H=0$.
\FORALL{$\alpha \in \{ d^\top x :~ x \in E\} \cap \ZZ$} \label{enumeration_step}
\STATE $H_\alpha := \{x \in \linh L:~ d^\top x = \alpha\}$.
\STATE Call $\MainAlgName(a_0,\rho,L,H_\alpha)$.
\ENDFOR
\RETURN If all recursive calls of the $\MainAlgName$ algorithm have returned EMPTY, then return EMPTY. In the opposite case, return $p + y$, where $y$ is a minimum point with respect to $\preceq$ between all recursive calls of the $\MainAlgName$ algorithm.
\end{algorithmic}
}
\caption{The algorithm $\MainAlgName$ for minimizing a discrete \NewAdd{convic} function}
\end{figure}

 

\NewAdd{
\begin{theorem}
There exists an algorithm for minimizing a discrete \NewAdd{convic} function, given by the comparison oracle on $B_{\rho} \cap \ZZ^n$, using $2^{O(n^2 \log n)} \log \rho$ calls to the oracle. The bit complexity of the algorithm is $2^{O(n^2 \log n)} \poly(\log \rho)$.
\end{theorem}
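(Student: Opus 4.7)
The plan is to prove correctness of $\MainAlgName$ by induction on the dimension and then analyze the oracle and bit complexity separately.

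First, I would establish correctness. By Lemma~\ref{algorithm1}, the preprocessing step preserves the set of feasible points up to the shift $p$, so it suffices to track $\MainAlgName$ on the reduced lattice with the shifted origin. The main loop maintains the invariant that the unknown minimum $x^0$ (with respect to $\preceq$) on the current feasible set lies in $E$: initially $E$ is the full ball, and Lemma~\ref{lemma5}, whose hypothesis $\theta\le\sigma$ is exactly the condition guarding the shallow cut step, guarantees that each such cut discards no minimum. When the loop exits with $\tau>\sigma$, line~\ref{enumeration_step} partitions the remaining feasible set into integer slices $H_\alpha$ of $E$; the inductive hypothesis then solves each slice in dimension $n-1$, and the best candidate is returned.

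Next, I would bound the number of $\beta$-rounding iterations. Formula~(\ref{N}) of Section~\ref{sec_beta_rounding} gives $N_\epsilon=O(n^3(\log\rho+|\log\epsilon|))$ steps until either the loop terminates or $\det A\le\epsilon$; choosing $\epsilon$ so that $\det A\le\epsilon$ forces any KZ basis of $L$ inside $E$ to contain a vector of $A^{-1}$-length exceeding $\sigma$ guarantees that the exit criterion $\tau>\sigma$ triggers within $O(n^3\log\rho)$ iterations. Each iteration constructs one shallow cut which, by Lemma~\ref{lemma6}, consumes $2^{O(n\log n)}\cdot\defect G$ oracle calls; inserting the classical bound $\defect G\le 2^{O(n\log n)}$ for the Korkin--Zolotarev basis yields $2^{O(n\log n)}$ oracle calls per iteration.

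The main obstacle is bounding the number of hyperplanes enumerated in line~\ref{enumeration_step}. The argument I intend to use is a flatness-type statement: the exit condition $\tau>\sigma$ means that the KZ basis, reduced with respect to the $A^{-1}$ metric, contains a vector whose $A^{-1}$-norm exceeds $\sigma$, and by the standard duality between a KZ basis and the integer irreducible dual vector $d$ (Euclidean-orthogonal to $h_1,\dots,h_{n-1}$), this bounds the Euclidean width $2\|d\|_A$ of $E$ in direction $d$ by a quantity of order $1/\sigma$ times a polynomial-in-$n$ factor coming from the KZ orthogonality defect. Since $d$ is an integer vector, this yields at most $2^{O(n\log n)}$ integer values of $\alpha$ to recurse on.

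Putting these pieces together gives the recurrence
\begin{equation*}
  T(n,\rho) \le 2^{O(n\log n)}\log\rho + 2^{O(n\log n)}\,T(n-1,\rho),
\end{equation*}
with base case $T(0,\rho)=O(1)$. Unrolling yields $T(n,\rho)=2^{O(n^2\log n)}\log\rho$ for the number of oracle calls, matching the claimed bound. For bit complexity, I would invoke the standard fact that throughout the ellipsoid method the encodings of $A$, $a$, the KZ basis, and the dual vector $d$ remain of size $\poly(n,\log\rho)$, so each step beyond the oracle calls costs $\poly(\log\rho)$ bit operations; combined with the oracle count, this produces the bound $2^{O(n^2\log n)}\poly(\log\rho)$.
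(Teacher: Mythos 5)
Your proposal is correct and follows essentially the same route as the paper: correctness via Lemma~\ref{lemma5} preserving the minimizer under shallow cuts, termination of the $\beta$-rounding loop via the determinant bound $\det(A^{-1})\le\tau^n$, per-iteration oracle cost from Lemma~\ref{lemma6} combined with the $2^{O(n\log n)}$ Korkin--Zolotarev defect bound, the width bound $|d^\top x|\le\defect H/\tau$ to limit the hyperplane enumeration to $2^{O(n\log n)}$ slices, and the same recurrence $\varphi(n,\rho)=2^{O(n\log n)}\log\rho+2^{O(n\log n)}\varphi(n-1,\rho)$. The only (immaterial) deviations are the exact polynomial degree in the iteration count and describing the width factor as ``polynomial in $n$'' where it is in fact the $2^{O(n\log n)}$ defect, which you correctly use in the final count anyway.
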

}
\begin{proof}
\NewAdd{
Let $\preceq$ be an order on $\ZZ^n$ induced by the function $f$. To find a minimum point we need to call algorithm $\MainAlgName$ with input parameters $a_0 = 0$, $\rho$, $L = \ZZ^n$ and $H = \{x \in \RR^n : 0^\top x = 0\}$. 

The correctness of the algorithm follows directly from the fact that we enumerate all possible hyperplanes $H_\alpha := \{x \in \linh L:~ d^\top x = \alpha\}$ in line \ref{enumeration_step}, where $\alpha \in \{ d^\top x : x \in E\} \cap \ZZ$, while the ellipsoid $E$ localizes some minimum point.
}

%
%

\NewAdd{
Let us prove the claimed oracle complexity estimate. Firstly, let us estimate the number of iterations of the repeat-until loop in lines 4--10. Let $H = (h_1, h_2, \dots, h_n)$, then
$$
\det (A^{-1}) = \det (H^\top A^{-1} H) \leq \|h_1\|_{A^{-1}} \dots \|h_n\|_{A^{-1}} \leq \tau^n.
$$ 

Hence, when $\det A$ becomes less then $\sigma^{-n}$, we will have $\tau > \sigma$, and the loop will be finished. Taking $\epsilon=\sigma^{-n}$, we obtain the bound $O(n^4 \log (n \rho))$ for the number of iterations in the $\beta$-rounding procedure.

By the Lemma \ref{lemma6}, the oracle complexity of the step 9 is $2^{O(n \log n)} \defect H$. 
}

It is known \cite{Lagarias1990}, that the upper bound for the orthogonality defect of a Korkin--Zolotarev reduced basis is $2^{O(n\log n)}$. \NewAdd{Hence, the oracle complexity of the repeat-until loop in lines 4--10 become $2^{O(n \log n)}$. }

Suppose that $\tau > \sigma$. As in \cite[p. 258, formula (56)]{Schrijver}, 
it can be shown\footnote{\NewAdd{In \cite[p. 258, formula (56)]{Schrijver}, LLL basis is discussed, but the proof of the inequality is valid for any basis, including not reduced one.}} that  $|d^{\top}x|\le \defect H/\tau$ for any $x\in E(A,a)$. Hence, the initial problem is reduced to solving at most 
$$
2 \defect H/\sigma+1= 27 n(n+1)^3 \defect H =2^{O(n\log n)} 
$$ 
problems in the dimension $n-1$.

Denoting by $\varphi(n,\rho)$ the total number of calls to the oracle, we obtain
$$
\varphi(n,\rho) = 2^{O(n \log n)} \log \rho + 2^{O(n\log n)} \varphi(n-1, \rho) =  2^{O(n^2 \log n)} \log \rho.
$$
\NewAdd{Now, to derive the claimed bound for the bit complexity, we could repeat the arguments from Dadush' thesis \cite[p.\,231]{Dadush}.}
%
%
%
%
\qed
\end{proof}


%
%
%
%
%

\NewAdd{
\paragraph{Acknowledgment}
The work was supported by the Russian Science Foundation Grant No. 17-11-01336.

The authors are extremely grateful to the reviewers for their careful reading of the manuscript and suggesting making changes, which greatly improved the paper.
}

\end{document}